\documentclass[a4paper,12pt]{amsart}
\usepackage{amssymb}
\usepackage{ifthen}
\usepackage[dvips]{graphicx}

\nonstopmode \numberwithin{equation}{section}
\usepackage{amssymb}
\usepackage{ifthen}
\usepackage{graphicx}
\usepackage{amsmath}
\usepackage[T1]{fontenc} 
\usepackage[utf8]{inputenc}
\usepackage[usenames,dvipsnames]{color}
\usepackage{color}
\usepackage[english]{babel}
\usepackage{fancyhdr}
\usepackage{fancybox}
\usepackage{tikz}
\usepackage{wasysym}

\nonstopmode \numberwithin{equation}{section}
\theoremstyle{plain}

\newtheorem{conj}{Conjecture}

\theoremstyle{definition}
\newtheorem{defn}{Definition}[section]

\newtheorem{thm}{Theorem}[section]
\newtheorem{prob}{Problem}[section]
\newtheorem{cor}{Corollary}[section]

\newtheorem{prop}{Proposition}[section]
\newtheorem{rem}{Remark}[section]
\newtheorem{lem}{Lemma}[section]

\newcounter{minutes}
\divide\time by 60
\newcounter{hours}
\multiply\time by 60
\addtocounter{minutes}{-\time}

\newcounter {own}
\def\theown {\thesection       .\arabic{own}}

\newenvironment{pf}[1][]{%
	\vskip 3mm
	\noindent
	\ifthenelse{\equal{#1}{}}%
	{{\slshape Proof. }}%
	{{\slshape #1.} }%
}%
{\qed\bigskip}

\newcounter{alphabet}

\def\be{\begin{equation}}
	\def\ee{\end{equation}}

\newcommand{\bee}{\begin{enumerate}}
	\newcommand{\eee}{\end{enumerate}}

\newcommand{\blem}{\begin{lem}}
	\newcommand{\elem}{\end{lem}}
\newcommand{\bthm}{\begin{thm}}
	\newcommand{\ethm}{\end{thm}}
\newcommand{\bcor}{\begin{cor}}
	\newcommand{\ecor}{\end{cor}}
\newcommand{\beg}{\begin{examp}}
	\newcommand{\eeg}{\end{examp}}
\newcommand{\begs}{\begin{examples}}
	\newcommand{\eegs}{\end{examples}}

\newcommand{\bdefn}{\begin{defn}}
	\newcommand{\edefn}{\end{defn}}

\newcommand{\bprob}{\begin{prob}}
	\newcommand{\eprob}{\end{prob}}
\newcommand{\bei}{\begin{itemize}}
	\newcommand{\eei}{\end{itemize}}

\newcommand{\bcon}{\begin{conj}}
	\newcommand{\econ}{\end{conj}}
\newcommand{\bcons}{\begin{conjs}}
	\newcommand{\econs}{\end{conjs}}
\newcommand{\bprop}{\begin{prop}}
	\newcommand{\eprop}{\end{prop}}
\newcommand{\br}{\begin{rem}}
	\newcommand{\er}{\end{rem}}
\newcommand{\brs}{\begin{rems}}
	\newcommand{\ers}{\end{rems}}
\newcommand{\bo}{\begin{obser}}
	\newcommand{\eo}{\end{obser}}
\newcommand{\bos}{\begin{obsers}}
	\newcommand{\eos}{\end{obsers}}
\newcommand{\bpf}{\begin{pf}}
	\newcommand{\epf}{\end{pf}}
\newcommand{\ba}{\begin{array}}
	\newcommand{\ea}{\end{array}}
\newcommand{\beq}{\begin{eqnarray}}
	\newcommand{\beqq}{\begin{eqnarray*}}
		\newcommand{\eeq}{\end{eqnarray}}
	\newcommand{\eeqq}{\end{eqnarray*}}

\begin{document}
	
   \title{Moduli difference of initial inverse logarithmic coefficients for starlike and convex functions}

	\author{Molla Basir Ahamed$^*$}
	\address{Molla Basir Ahamed, Department of Mathematics, Jadavpur University, Kolkata-700032, West Bengal, India.}
	\email{mbahamed.math@jadavpuruniversity.in}
	
	\author{Partha Pratim Roy}
	\address{Partha Pratim Roy, Department of Mathematics, Jadavpur University, Kolkata-700032, West Bengal, India.}
	\email{pproy.math.rs@jadavpuruniversity.in}
	
	\subjclass[{AMS} Subject Classification:]{Primary 30A10, 30H05, 30C35, Secondary 30C45}
	\keywords{Analytic functions, Starlike functions, Convex functions, Lune domains, inverse coefficients, Sharp bounds}
	
	\def\thefootnote{}
	\footnotetext{ {\tiny File:~\jobname.tex,
			printed: \number\year-\number\month-\number\day,
			\thehours.\ifnum\theminutes<10{0}\fi\theminutes }
	} \makeatletter\def\thefootnote{\@arabic\c@footnote}\makeatother
	
\begin{abstract} 
Let $\mathcal{A}$ denote the class of functions $f$ that are analytic in the open unit disk $\mathbb{D}$ and satisfy the normalization conditions $f(0) = 0$ and $f'(0) = 1$. This paper investigates the inverse logarithmic coefficients $\Gamma_n$, which are defined by the expansion $\log(f^{-1}(w)/w) = 2\sum_{n=1}^{\infty} \Gamma_n w^n$. We establish sharp upper and lower bounds for the difference of the moduli of the first two inverse logarithmic coefficients, $|\Gamma_2| - |\Gamma_1|$, for several significant subclasses of univalent functions. Specifically, we derive sharp estimates for functions belonging to the class of starlike functions with respect to symmetric points ($\mathcal{S}_S^*$), convex functions with respect to symmetric points ($\mathcal{K}_S$), and functions associated with the lune domain ($\mathcal{S}_{\leftmoon}^*$ and $\mathcal{C}_{\leftmoon}$). The results are obtained by employing subordination techniques and utilizing sharp estimates for the coefficients of Schwarz functions. In each case, the extremal functions that attain these bounds are explicitly identified. Our findings provide further insights into the geometric properties of inverse mappings and extend recent research on coefficient functionals in geometric function theory.
\end{abstract}
	
	\maketitle
	\pagestyle{myheadings}
	\markboth{M. B. Ahamed and P. P. Roy}{\title{On the Moduli Difference of Inverse Logarithmic Coefficients}
	}
	
	\section{\bf Introduction}
	Let $\mathcal{H}$ be the class of functions $f$ which are holomorphic in the open unit disk
	$\mathbb{D}=\{z\in\mathbb{C}:\,|z|<1\}$ of the form
	\begin{align}\label{Eq-1.1}
		f(z)=\sum_{n=1}^{\infty} a_n z^n,\quad \text{for } z\in\mathbb{D}.	
	\end{align}
	
	Then $\mathcal{H}$ is a locally convex topological vector space endowed with the
	topology of uniform convergence on compact subsets of $\mathbb{D}$. Let
	$\mathcal{A}$ be the class of functions $f \in \mathcal{H}$ with $f(0)=0$ and
	$f'(0)=1$, so that
	\begin{align}\label{Eq-1.2}
		f(z)=z+\sum_{n=2}^{\infty} a_n z^n, \quad z\in\mathbb{D}.
	\end{align}

	Let $S$ denote the subclass of all functions in $\mathcal{A}$ which are univalent. For a general theory of univalent functions, we refer the classical books \cite{Duren-1983-NY}. A function $f$ is said to be starlike in $\mathbb{C}$ if it maps the unit disk
	$\mathbb{D}=\{z\in\mathbb{C}:|z|<1\}$ conformally onto a starlike domain with	respect to the origin. The class of functions that are starlike with respect	to symmetric points, denoted by $\mathcal{S}_S^*$, was introduced by Sakaguchi. A function $f\in \mathcal{S}_S^*$ if, and only if,
	\begin{align*}
			{\rm Re}\left(\frac{z f'(z)}{f(z)-f(-z)}\right)>0, \qquad z\in\mathbb{D}.
	\end{align*}
	A function $f$ is said to be \emph{convex} in the complex plane if it maps the unit disk $\mathbb{D}$ conformally onto a convex region. The family of functions that	are convex with respect to symmetric points is denoted by $\mathcal{K}_{S}$ and can be
	characterized by the condition
	\begin{align*}
		{\rm Re}\left(\frac{(z f'(z))'}{f(z)-f(-z)}\right) > 0, \qquad z \in \mathbb{D}.
	\end{align*}
	In geometric function theory, analytic function classes such as starlike and convex	functions are fundamental in describing the geometric behavior of complex mappings. In particular, starlike and convex functions associated with lune domains constitute
	an important area of study. These classes play a key role in the theory of conformal	mappings, which preserve angles locally, and find applications in potential theory, fluid dynamics, and electrostatics. Furthermore, they offer significant insight into
	the geometry of image domains and the structural properties of holomorphic functions.\vspace{1.2mm}

	A function $f \in \mathcal{A}$ is considered starlike (see \cite[p.~40]{Duren-1983-NY}) if it is univalent in $\mathbb{D}$ and its image $f(\mathbb{D})$ is a starlike domain with respect to the origin. Analytically, a function $f$ is called starlike (see \cite[p.~41, Theorem~2.10]{Duren-1983-NY}) if $f\in\mathcal{A}$ and 
	\[
	{\rm Re}\left(\frac{z f'(z)}{f(z)}\right) > 0 \quad \text{for } z \in \mathbb{D}.
	\]
	The class of starlike functions is denoted by $\mathcal{S}^*$.\vspace{1.2mm}
	
	On the other hand, a function $f \in \mathcal{S}$ is called convex (see \cite[p.~40]{Duren-1983-NY}) if $f(\mathbb{D})$ is convex, and the class of convex functions is denoted by $\mathcal{C}$. Moreover, $f \in \mathcal{C}$ if, and only if,
	\begin{align*}
		{\rm Re}\left(z f'(z)\right) \in \mathcal{S}^* \quad \text{for } z \in \mathbb{D}.
	\end{align*}
	
	 The paper shifts focus from the traditional power series coefficients ($a_n$) and standard logarithmic coefficients ($\gamma_n$) to inverse logarithmic coefficients. While $a_n$ and $\gamma_n$ have been studied for decades—most notably in the proof of the Bieberbach conjecture—the study of $\Gamma_n$ for specific subclasses of univalent functions represents a relatively fresh area of research in geometric function theory. The core novelty lies in establishing sharp upper and lower bounds for the difference of the moduli of the first two inverse logarithmic coefficients. Rather than looking at a single coefficient's magnitude, this work examines the `moduli difference' functional, providing deeper insights into the growth and structural properties of the inverse mappings. The work explores subclasses defined by symmetric points, namely $\mathcal{S}^*_S$ (starlike with respect to symmetric points) and $\mathcal{K}_S$ (convex with respect to symmetric points). These classes, introduced by Sakaguchi, are more restrictive than standard starlike or convex classes, and the paper derives sharp results specifically tailored to these symmetric geometric constraints. A significant novel aspect is the association of these coefficients with functions related to the Lune Domain, denoted as $\mathcal{S}^*_{\leftmoon}$ and $\mathcal{C}_{\leftmoon}$. By requiring the quantity $zf'(z)/f(z)$ to lie within a region bounded by a lune (defined by the inequality $|w^2 - 1| \le 2|w|$), the paper connects analytic coefficient theory with modern, non-disk image domains. In ouw work, we utilize the principle of subordination alongside specific technical lemmas, such as Lemma 2.1, to transform geometric conditions into manageable coefficient problems. By employing sharp estimates for Schwarz functions, we provide a rigorous framework for solving functional difference problems that were previously unaddressed for these specific subclasses.\vspace{1.2mm}

	 The principle of subordination is a fundamental and versatile tool in geometric function theory, serving as a cornerstone for defining function classes and solving complex extremal problems. We recall the definition here.
	\begin{defn}(see \cite[p.~43, Theorem~2.12]{Duren-1983-NY})\label{Def-1.1}.
			Let $f$ and $g$ be two analytic functions in $\mathbb{D}$. Then $f$ is said to be subordinate to $g$, written as $f \prec g$ or $f(z) \prec g(z)$, if there exists a function $\omega$, analytic in $\mathbb{D}$ with $\omega(0)=0$, $|\omega(z)|<1$, and
		\[
		f(z)=g(\omega(z)) \quad \text{for } z\in\mathbb{D}.
		\]
		Moreover, if $g$ is univalent in $\mathbb{D}$ and $f(0)=g(0)$, then $f(\mathbb{D}) \subset g(\mathbb{D}).$
	\end{defn}
	In \cite{Raina-Sokol-CRAS-2015}, Raina and Sokol introduced the class $\mathcal{S}^*_{\leftmoon}$, defined by
	\begin{align*}
		\mathcal{S}^*_{\leftmoon}
		:=
		\left\{
		f \in \mathcal{S} :
		\left|\left(\frac{z f'(z)}{f(z)}\right)^2 - 1\right|
		\le
		2 \left|\frac{z f'(z)}{f(z)}\right|,
		\quad z \in \mathbb{D}
		\right\}.
	\end{align*}
	Geometrically, a function $f \in \mathcal{S}^*_{\leftmoon}$ is such that, for any $z \in \mathbb{D}$, the quantity $\frac{z f'(z)}{f(z)}$ lies in the region bounded by a lune. This region is given by
	\[
	\Omega=\left\{ w \in \mathbb{C} : |w^2 - 1| \le 2|w| \right\}.
	\]
	\begin{figure}[htbp]
		\centering
		\includegraphics[width=0.7\textwidth]{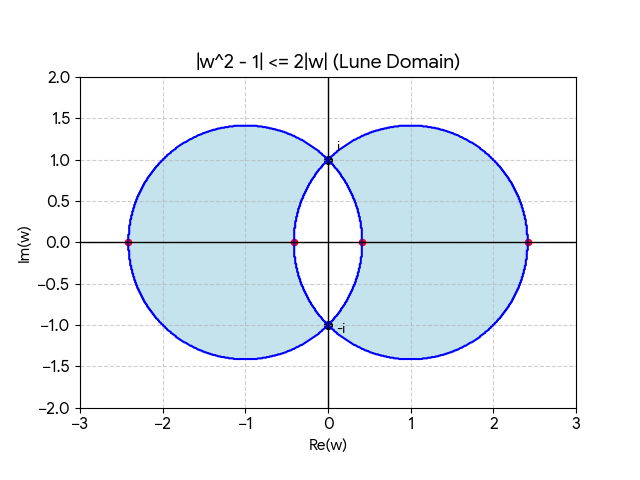}
		\caption{ The figure focuses strictly on the Lune domain $\mathcal{L}$ defined by the inequality $|w^2 - 1| \le 2|w|$. The region is bounded by two symmetric arcs that meet at the vertical vertices $i$ and $-i$. On the real axis, the right-hand lobe extends from $1 - \sqrt{2} \approx -0.414$ to $1 + \sqrt{2} \approx 2.414$. Due to symmetry, the left-hand lobe spans from $-(1 + \sqrt{2})$ to $-(1 - \sqrt{2})$. This region is the range of the function $q(z) = z + \sqrt{1+z^2}$ for $z$ in the unit disk $\mathbb{D}$. In geometric function theory, a function $f$ belongs to the class $\mathcal{S}_{\leftmoon}^{*}$ if the quantity $\frac{zf'(z)}{f(z)}$ takes values within this domain.}
		\label{fig:lune_domain}
	\end{figure}
	By using the definition of subordination, the class $\mathcal{S}^*_{\leftmoon}$ can be equivalently defined as
	\[
	\mathcal{S}^*_{\leftmoon}
	=
	\left\{
	f \in \mathcal{S} :
	\frac{z f'(z)}{f(z)} \prec z + \sqrt{1+z^2} = q(z),
	\quad z \in \mathbb{D}
	\right\},
	\]
	where the branch of the square root is chosen so that $q(0)=1$.\vspace{1.2mm}
		
	Similarly, the class $\mathcal{C}^*_{\leftmoon}$ of convex functions associated with the lune is defined by
	\[
	\mathcal{C}_{\leftmoon}
	:=
	\left\{
	f \in \mathcal{S} :
	1 + \frac{z f''(z)}{f'(z)} \prec q(z),
	\quad z \in \mathbb{D}
	\right\}.
	\]
	
	The class $\mathcal{S}^*_{\leftmoon}$ has been extensively investigated by several authors \cite{Raina-Sokol-CRAS-2015, Raina-Sokol-HJMS-2015, Raina-Sokol-MMN-2015}. The coefficient bounds and the sharp Fekete--Szeg\"o inequality for this class were established by Raina and Sokol (see \cite{Raina-Sokol-HJMS-2015,Raina-Sokol-MMN-2015}). Sharma \textit{et al.}~\cite{Sharma-Raina-Sokol-AMP-2019} examined certain differential subordinations related to $\mathcal{S}^*_{\leftmoon}$. In \cite{Riaz-BSM-2023}, Riaz and Raza established results concerning the third Hankel determinant for starlike and convex functions associated with lune. Integral representations and sufficient conditions for functions in $\mathcal{S}^*_{\leftmoon}$ were provided by Raina \textit{et al.}~(see \cite{Raina-Sokol-CRAS-2015}).\vspace{1.2mm}

	In ${1985}$, de Branges \cite{deBranges-1985} solved the famous Bieberbach conjecture, by showing that if $f \in \mathcal{S}$ of the form \eqref{Eq-1.2}, then $|a_n| \le n$ for $n \ge 2$ with equality	holds for the Koebe function $k(z):=z/(1-z)^2$ or its rotations. It was therefore natural to ask if for $f \in \mathcal{S}$, the inequality $||a_{n+1}|-|a_n|| \le 1$ is true when $n \ge 2$. This problem was first studied by Goluzin \cite{Goluzin-MatSb-1946} with an aim to solve the Bieberbach conjecture. In $1963$, Hayman \cite{Hayman-JLMS-1963} proved that
	$||a_{n+1}|-|a_n|| \le A$ for $f \in \mathcal{S}$, where $A \ge 1$ is an absolute constant and the best known estimate as of now is $3.61$ due to Grinspun \cite{Grinspan-SibInstMat-1976}. On the other hand,
	for the class $\mathcal{S}$, the sharp bound is known only for $n=2$ (see \cite{Duren-1983-NY}, Theorem 3.11),
	namely
	\begin{align*}
		-1 \le |a_3| - |a_2| \le 1.029\ldots
	\end{align*}
	Similarly, for functions $f \in \mathcal{S}^{*}$, Pommerenke \cite{Pommerenke-JDMV-1971} has conjectured that
	$||a_{n+1}|-|a_n|| \le 1$ which was proved later in 1978 by Leung \cite{Leung-BLMS-1978}. For convex
	functions, Li and Sugawa \cite{Li-Sugawa-CMFT-2017} investigated the sharp upper bound of $|a_{n+1}|-|a_n|$ for $n \ge 2$, and sharp lower bounds for $n=2,3$.\vspace{1.2mm}
	
	Let $f \in \mathcal{S}$ and let $F=f^{-1}$ be its inverse, which has the series
	representation
	\begin{align*}
		F(w)=w+\sum_{n=2}^{\infty}A_n w^n,
	\end{align*}
	valid in a neighborhood of the origin. Using the identity
	$f(f^{-1}(w))=w$ and comparing coefficients, we obtain
	\begin{align}\label{Eqn-1.3}
	 A_2=-a_2\; \mbox{and}\; A_3=2a_2^2-a_3.
	\end{align}
	Inverse functions of univalent mappings have been widely studied. The inverse functions are studied by several authors in different perspectives (see, for instance, \cite{Ali-Vasudevarao-PIASMS-2017,Sim-Thomas-Symmetry-2020,Thomas-Tuneski-Vasudevarao-Book-2018} and references therein).
	Recently, Sim and Thomas \cite{Sim-Thomas-Symmetry-2020,Sim-Thomas-BAMS-2021} obtained sharp upper and lower bounds	on the difference of the moduli of successive inverse coefficients for subclasses of univalent functions. For other coefficient problems concerning logarithmic coefficients, readers are referred to the articles \cite{Ahamed-Mandal-F-2026,Ahamed-Mandal-UMJ-2026,Man-Roy-Aha-IJS-2024,Mandal-Ahamed-LMJ-2024,Mandal-Ahamed-Zaprawa-MS-2025} and references therein. \vspace{2mm}
	
	In particular, sharp bounds for differences of successive inverse
	coefficients were recently established for several subclasses.
	The logarithmic coefficients $\gamma_n$ of $f$ are defined by
	\begin{align*}
			\log\frac{f(z)}{z}
		=2\sum_{n=1}^{\infty}\gamma_n z^n,\quad z\in\mathbb{D}.
	\end{align*}
	The logarithmic coefficients $\gamma_n$ are fundamental in the theory of	univalent functions, although only a few sharp bounds are known.
	Their relevance to the Bieberbach conjecture was highlighted by Milin
	\cite{Milin-TMM-1977}, who conjectured that for $f \in \mathcal{S}$ and $n \ge 2$,
	\begin{align*}
		\sum_{m=1}^{n}\sum_{k=1}^{m}
		\left(k|\gamma_k|^2-\frac{1}{k}\right)\le 0.
	\end{align*}
	This conjecture was later proved by de Branges, leading to the solution of	the Bieberbach conjecture. For the Koebe function
	$k(z)=z/(1-z)^2$, one has $\gamma_n=1/n$, but the estimate
	$|\gamma_n|\le 1/n$ does not hold in general for functions in $\mathcal{S}$.	Recent studies have therefore focused on logarithmic coefficients in	$\mathcal{S}$ and its subclasses.\vspace{1.2mm}
	
	Inverse logarithmic coefficients were introduced by Ponnusamy \textit{et al.}~\cite{Ponnusamy-Sharma-Wirths-RM-2018}.	They are defined by
	\begin{align}\label{Eqn-1.4}
		\log\frac{f^{-1}(w)}{w}
		=2\sum_{n=1}^{\infty}\Gamma_n w^n,\qquad |w|<\tfrac14.
	\end{align}
	Differentiating \eqref{Eqn-1.4} and using \eqref{Eqn-1.3}, we obtain
	\begin{align}\label{Eq-1.3}
		\Gamma_1=-\frac{1}{2}a_2\; \mbox{and}\;
		\Gamma_2=-\frac{1}{2}a_3+\frac{3}{4}a_2^2.
	\end{align}
	Ponnusamy \textit{et al.}~\cite{Ponnusamy-Sharma-Wirths-RM-2018} later established sharp bounds for the inverse
	logarithmic coefficients. In particular, for $f \in \mathcal{S}$,
	\begin{align*}
		|\Gamma_n|\le \frac{1}{2n}\binom{2n}{n}, \qquad n\in\mathbb{N}.
	\end{align*}
	Recently, Allu and Shaji (see \cite{AlluShaji2025}) obtained sharp lower and upper bounds for the quantity
	$|\Gamma_2| - |\Gamma_1|$ for functions $f$ belonging to the classes
	$\mathcal{S}$, $\mathcal{S}^*$, $\mathcal{C}$, $\mathcal{S}^*_\alpha$,
	$\mathcal{C}_\alpha$, $\mathcal{S}^*(\alpha)$, $\mathcal{C}(\alpha)$,
	$\mathcal{G}(\nu)$, $\mathcal{F}_0(\lambda)$, $\mathcal{S}^*_\gamma(\alpha)$,
	and $\mathcal{C}_\gamma(\alpha)$. In \cite{Raza-Raza-Bulg-2025}, the authors have established results for moduli difference of initial inverse coefficients of Bazilevič functions.\vspace{2mm}
	
	The primary objective of this paper is to derive sharp lower and upper bounds for
	$|\Gamma_2| - |\Gamma_1|$ for functions $f$ belonging to a given subclass of	$\mathcal{A}$.
   \section{\bf Main results}
   We now state our first main result, which provides sharp bound for
   $|\Gamma_2| - |\Gamma_1|$ when $f$ belongs to the class~$\mathcal{S}^{*}_{S}$.
   \begin{thm}\label{Th-1.1}
   		Let $f \in \mathcal{S}^{*}_{S}$ be given by \eqref{Eq-1.1},
   	then the following sharp inequality holds:
   	\begin{align}
   		\,|\Gamma_{2}| - |\,\Gamma_{1}| \le \frac{1}{2}.
   	\end{align}
   	 The inequality is sharp.
   \end{thm}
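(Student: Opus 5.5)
The plan is to reduce the functional $|\Gamma_2|-|\Gamma_1|$ to an expression in the first two coefficients of a Schwarz function, and then apply the classical sharp estimate $|c_2|\le 1-|c_1|^2$. One normalization issue comes first. The condition defining $\mathcal{S}_S^*$ is understood in its standard normalized form
\begin{align*}
\frac{2zf'(z)}{f(z)-f(-z)}\prec\frac{1+z}{1-z}.
\end{align*}
The factor $2$ does not change the positivity of the real part, and it makes the left-hand side equal to $1$ at the origin. So there is a Schwarz function $\omega(z)=c_1z+c_2z^2+\cdots$ with
\begin{align*}
\frac{2zf'(z)}{f(z)-f(-z)}=\frac{1+\omega(z)}{1-\omega(z)}.
\end{align*}

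Step one is to compare coefficients. Since $f(z)-f(-z)=2(z+a_3z^3+\cdots)$, the left-hand side expands as
\begin{align*}
(1+2a_2z+3a_3z^2+\cdots)(1-a_3z^2+\cdots)=1+2a_2z+2a_3z^2+\cdots.
\end{align*}
The right-hand side is $1+2c_1z+2(c_2+c_1^2)z^2+\cdots$. Hence $a_2=c_1$ and $a_3=c_2+c_1^2$. Substituting into \eqref{Eq-1.3} gives
\begin{align*}
\Gamma_1=-\frac{c_1}{2},\qquad \Gamma_2=-\frac{1}{2}(c_2+c_1^2)+\frac34 c_1^2=-\frac{c_2}{2}+\frac{c_1^2}{4}.
\end{align*}

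Step two is the estimate. The triangle inequality together with $|c_2|\le 1-|c_1|^2$ gives
\begin{align*}
|\Gamma_2|-|\Gamma_1|\le \frac{1-|c_1|^2}{2}+\frac{|c_1|^2}{4}-\frac{|c_1|}{2}=\frac12-\frac{|c_1|}{2}-\frac{|c_1|^2}{4}\le\frac12,
\end{align*}
where $|c_1|\le 1$. Equality throughout forces $c_1=0$ and $|c_2|=1$, which suggests $\omega(z)=z^2$.

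Step three is sharpness. Take the odd function $f(z)=z/(1-z^2)$. Then $f(z)-f(-z)=2f(z)$ and
\begin{align*}
\frac{2zf'(z)}{f(z)-f(-z)}=\frac{zf'(z)}{f(z)}=\frac{1+z^2}{1-z^2},
\end{align*}
so $f\in\mathcal{S}_S^*$ with $a_2=0$ and $a_3=1$. This gives $\Gamma_1=0$ and $\Gamma_2=-\tfrac12$, so $|\Gamma_2|-|\Gamma_1|=\tfrac12$.

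The main obstacle is bookkeeping rather than analysis. One has to fix the normalization of the defining condition correctly; without the factor $2$ the coefficient relations $a_2=c_1$, $a_3=c_2+c_1^2$ would change. The Schwarz-function lemma cited as Lemma 2.1 presumably supplies $|c_2|\le1-|c_1|^2$; if it does not, this inequality is classical and can be quoted directly.
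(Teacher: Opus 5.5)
Your proof is correct, but it takes a more elementary route than the paper.

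\textbf{The paper's route.} It works with the Carath\'eodory function $p=2zf'/(f(z)-f(-z))=1+c_1z+c_2z^2+\cdots$. This gives $a_2=c_1/2$, $a_3=c_2/2$, and $|\Gamma_2|-|\Gamma_1|=\frac1{16}\bigl(|3c_1^2-4c_2|-4|c_1|\bigr)$. (The printed factor $\frac1{48}$ is a misprint.) The paper then applies the Sim--Thomas estimate, Lemma~\ref{Lem-1.1}, with $B_1=4$, $B_2=3$, $B_3=-4$. Since $|2B_2+B_3|=2<|B_3|+B_1=8$, the lemma gives $\Psi_+\le 2|B_3|=8$, hence the bound $\frac12$.

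\textbf{Your route.} You pass to the Schwarz function $\omega$ and use only the triangle inequality and the classical bound $|c_2|\le 1-|c_1|^2$. Note that this bound is not what Lemma~\ref{Lem-1.1} states; that lemma is the general $\Psi_\pm$ estimate for Carath\'eodory coefficients. So quoting the Schwarz bound directly, as your fallback proposes, is the right call.

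\textbf{What each approach buys.} Your argument is self-contained. It also yields the refined inequality $|\Gamma_2|-|\Gamma_1|\le \frac12-\frac{|c_1|}{2}-\frac{|c_1|^2}{4}$, which identifies the extremal configuration $c_1=0$, $|c_2|=1$ immediately. It works because in $\Gamma_2=-\frac{c_2}{2}+\frac{c_1^2}{4}$ the weight on $c_1^2$ is smaller than the weight on $c_2$, so the deficit in $|c_2|\le 1-|c_1|^2$ absorbs it. The paper's lemma-based method is instead uniform across all four theorems. In particular, it handles the lower bounds for the lune classes, where one needs an upper bound on $|\Gamma_1|-|\Gamma_2|$ (a genuine lower bound on $|\Gamma_2|$), and the triangle inequality alone does not provide that.

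\textbf{Extremal function.} Your check that $z/(1-z^2)$ gives $2zf'/(f(z)-f(-z))=(1+z^2)/(1-z^2)$, i.e.\ $\omega(z)=z^2$, is correct. The paper records $(1+z)/(1-z)$ at that point, which is a slip, though it does not affect $a_2=0$, $a_3=1$.
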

   To prove our results, we need the following Lemma.
   
   \begin{lem}\cite{Sim-Thomas-Symmetry-2020}\label{Lem-1.1}
   	Let $B_{1}, B_{2},$ and $B_{3}$ be numbers such that $B_{1} > 0$, 
   	$B_{2} \in \mathbb{C}$, and $B_{3} \in \mathbb{R}$. 
   	Let $p \in \mathcal{P}$ be of the form~(2.1). 
   	Define $\Psi_{+}(c_{1}, c_{2})$ and $\Psi_{-}(c_{1}, c_{2})$ by
   	\[
   	\Psi_{+}(c_{1}, c_{2}) = |B_{2}c_{1}^{2} + B_{3}c_{2}| - |B_{1}c_{1}|,
   	\]
   	and
   	\[
   	\Psi_{-}(c_{1}, c_{2}) = -\,\Psi_{+}(c_{1}, c_{2}).
   	\]
   	Then
   	\begin{align}\label{Eq-2.2}
   		\Psi_{+}(c_{1}, c_{2}) \le 
   		\begin{cases}
   			|4B_{2} + 2B_{3}| - 2B_{1}, & \text{if } |2B_{2} + B_{3}| \ge |B_{3}| + B_{1}, \\[6pt]
   			2|B_{3}|, & \text{otherwise.}
   		\end{cases}
   		\tag{2.2}
   	\end{align}
   	\begin{align}\label{Eq-2.3}
   		\Psi_{-}(c_{1}, c_{2}) \le 
   		\begin{cases}
   			2B_{1} - B_{4}, & \text{if } B_{1} \ge B_{4} + 2|B_{3}|, \\[8pt]
   			2B_{1}\sqrt{\dfrac{2|B_{3}|}{B_{4} + 2|B_{3}|}}, & \text{if } B_{1}^{2} \le 2|B_{3}|(B_{4} + 2|B_{3}|), \\[10pt]
   			2|B_{3}| + \dfrac{B_{1}^{2}}{B_{4} + 2|B_{3}|}, & \text{otherwise,}
   		\end{cases}
   		\tag{2.3}
   	\end{align}
   	where $B_{4} = |4B_{2} + 2B_{3}|$. All inequalities in \eqref{Eq-2.2} and \eqref{Eq-2.3} are sharp.
   \end{lem}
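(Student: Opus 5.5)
The plan is to reduce both estimates to one-variable problems on $[0,1]$ by parametrizing the initial coefficients of $p\in\mathcal{P}$. Write $p(z)=1+c_1z+c_2z^2+\cdots$. I will use the classical description of the pairs $(c_1,c_2)$ (Libera--Z{\l}otkiewicz): there exist $\zeta_1,\zeta_2$ in the closed unit disk with
\[
c_1=2\zeta_1,\qquad c_2=2\zeta_1^2+2(1-|\zeta_1|^2)\zeta_2 .
\]
Conversely, every such pair $(\zeta_1,\zeta_2)$ arises from some $p\in\mathcal{P}$.

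Next I normalize by a rotation. Replacing $p(z)$ by $p(e^{i\theta}z)$ multiplies $c_1$ by $e^{i\theta}$ and $c_2$ by $e^{2i\theta}$. Hence $|B_2c_1^2+B_3c_2|$ and $|B_1c_1|$ are unchanged, and so are $\Psi_\pm$. We may therefore assume $\zeta_1=t\in[0,1]$ and write $\zeta=\zeta_2$. Then
\[
B_2c_1^2+B_3c_2=(4B_2+2B_3)t^2+2B_3(1-t^2)\zeta ,\qquad |B_1c_1|=2B_1t .
\]

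\emph{Upper estimate \eqref{Eq-2.2}.}
\begin{enumerate}
\item The triangle inequality gives $\Psi_+\le g(t):=(B_4-2|B_3|)t^2-2B_1t+2|B_3|$. Equality holds when $\zeta$ is chosen so that the two terms have the same argument.
\item If $B_4-2|B_3|\le 0$, then $g$ is concave with $g'(0)=-2B_1<0$. So $g$ decreases on $[0,1]$ and $\max g=g(0)=2|B_3|$.
\item If $B_4-2|B_3|>0$, then $g$ is convex, and its maximum is $\max\{g(0),g(1)\}=\max\{2|B_3|,\,B_4-2B_1\}$.
\item In both cases $\max g=\max\{2|B_3|,B_4-2B_1\}$. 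Moreover $B_4-2B_1\ge 2|B_3|$ is exactly the condition $|2B_2+B_3|\ge |B_3|+B_1$, which is the case split in \eqref{Eq-2.2}.
\item Sharpness: take $t=1$, i.e.\ $p(z)=(1+z)/(1-z)$, in the first case, and $t=0$, $\zeta=1$, i.e.\ $p(z)=(1+z^2)/(1-z^2)$, in the second.
\end{enumerate}

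\emph{Lower estimate \eqref{Eq-2.3}.}
\begin{enumerate}
\item Here we need the minimum over $|\zeta|\le 1$ of $|(4B_2+2B_3)t^2+2B_3(1-t^2)\zeta|$. This is the distance from $0$ to a closed disk of centre modulus $B_4t^2$ and radius $2|B_3|(1-t^2)$. It equals $\max\{0,\,(B_4+2|B_3|)t^2-2|B_3|\}$, and the minimum is attained by an admissible $\zeta$.
\item Consequently $\Psi_-\le h(t):=2B_1t-\max\{0,(B_4+2|B_3|)t^2-2|B_3|\}$, with equality attainable for each $t$.
\item Set $t_0=\sqrt{2|B_3|/(B_4+2|B_3|)}\in[0,1]$. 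On $[0,t_0]$ we have $h(t)=2B_1t$, which is increasing.
\item On $[t_0,1]$, $h$ is the concave parabola $2B_1t-(B_4+2|B_3|)t^2+2|B_3|$ with vertex $t^*=B_1/(B_4+2|B_3|)$.
\item If $t^*\ge1$, i.e.\ $B_1\ge B_4+2|B_3|$, the maximum is $h(1)=2B_1-B_4$.
\item If $t^*\le t_0$, i.e.\ $B_1^2\le 2|B_3|(B_4+2|B_3|)$, the maximum is $h(t_0)=2B_1t_0$. This is the middle formula.
\item Otherwise the maximum is the vertex value $2|B_3|+B_1^2/(B_4+2|B_3|)$.
\item In each case the extremal $t$ together with the minimizing $\zeta$ gives, through the converse of the parametrization, a $p\in\mathcal{P}$ attaining the bound. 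This proves sharpness.
\end{enumerate}

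\emph{Main obstacle.} The delicate step is the lower estimate. One must justify that the minimal modulus over the closed disk, including the value $0$, is actually attained by an admissible $\zeta$, so that the bound is sharp and not merely an estimate. One must also check that the three cases are compatible and exhaustive; they are, because $t^*\le t_0$, $t_0<t^*<1$ and $t^*\ge 1$ partition the possibilities. Finally, the degenerate situation $B_4+2|B_3|=0$ must be treated separately, where $h(t)=2B_1t$ and the first case applies.
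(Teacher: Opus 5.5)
The paper does not prove this lemma; it imports it from Sim and Thomas, so there is no in-paper argument to compare against. Your proof is correct and complete, and it is essentially the standard proof of this lemma: rotate so that $c_1=2t\ge 0$, use the Libera--Z{\l}otkiewicz (Schur) parametrization $c_2=2t^2+2(1-t^2)\zeta$, then reduce to maximizing a function of one real variable on $[0,1]$. Your case conditions $t^*\ge 1\iff B_1\ge B_4+2|B_3|$ and $t^*\le t_0\iff B_1^2\le 2|B_3|(B_4+2|B_3|)$ are right, as are the vertex value, the extremal choices of $(t,\zeta)$, and your handling of $B_4+2|B_3|=0$. In passing, note that you never use $B_3\in\mathbb{R}$, so your argument also proves the lemma for complex $B_3$.
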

   \begin{proof}[\bf Proof of Theorem \ref{Th-1.1}]
   	If $f\in \mathcal{S}_{S}^*$ be given by \eqref{Eq-1.1}, then by the principle of subordination, there exists a Schwarz function
   	$p(z)=1+\sum_{n=1}^{\infty}c_n z^n$ such that
   	\begin{align}
   		\frac{2 z f'(z)}{f(z)-f(-z)}=p(z). 
   	\end{align}
   	By comparing coefficients of powers of $z$ in both sides, we obtain
   	\begin{align}\label{Eq-1.6}
   		a_{2}=\frac{1}{2}c_{1}, \qquad 
   		a_{3}=\frac{1}{2}c_2.
   	\end{align}
   	Using \eqref{Eq-1.3} and \eqref{Eq-1.6}, we obtain
   	\begin{align}\label{Eq-2.4}
   		|\,\Gamma_{2}| - |\,\Gamma_{1}| 
   		= \frac{1}{16}\!\left(|B_{3}c_{2}+B_{2}c_{1}^{2}|-|B_{1}c_{1}|\right)=\frac{1}{48}\,\Psi_{+}(c_1,c_2)
   	\end{align}
   	where $B_{1}=4, B_{2}=3\; \mbox{and}\;B_{3}=-4.$
   	
   	It is easy to see  that the condition $	|2B_2 + B_3|=2 <|B_3| + B_1=8$
   	holds. Therefore, by Lemma \ref{Lem-1.1}, we have $\Psi_{+}(c_1,c_2)\leq 2|B_3|$. Then from \eqref{Eq-2.4} we have
   	       \begin{align*}
   	       	|\Gamma_2| - |\Gamma_1| \le \frac{2|-4|}{16}=\frac{1}{2}.
   	       \end{align*} 
To show the equality, we consider the function
\begin{align*}
	f_{1}(z)=\frac{z}{1-z^{2}}=z+z^{3}+z^{5}+\cdots, \quad z\in\mathbb{D}.
\end{align*}
Moreover, a simple computation yields
\begin{align*}
	\frac{2z f_{1}'(z)}{f_{1}(z)-f_{1}(-z)}
	=
	\frac{1+z}{1-z}
	=:p_{1}(z).
\end{align*}

The function
\begin{align*}
	p_{1}(z)
	=
	\frac{1+z}{1-z}
	=
	1+2z+2z^{2}+2z^{3}+\cdots
\end{align*}
belongs to the Carath\'eodory class \(\mathcal{P}\). Hence, \(f_{1}\in \mathcal{S}_S^*\).

Now a simple computation shows that
\begin{align*}
	|\Gamma_{2}|-|\Gamma_{1}|
	=
	\frac12.
\end{align*}

This shows that the  inequality in Theorem~\ref{Th-1.1} is sharp.

.\vspace{2mm}

   	 \begin{figure}[htbp]
   	 	\centering
   	 	\includegraphics[width=0.35\textwidth]{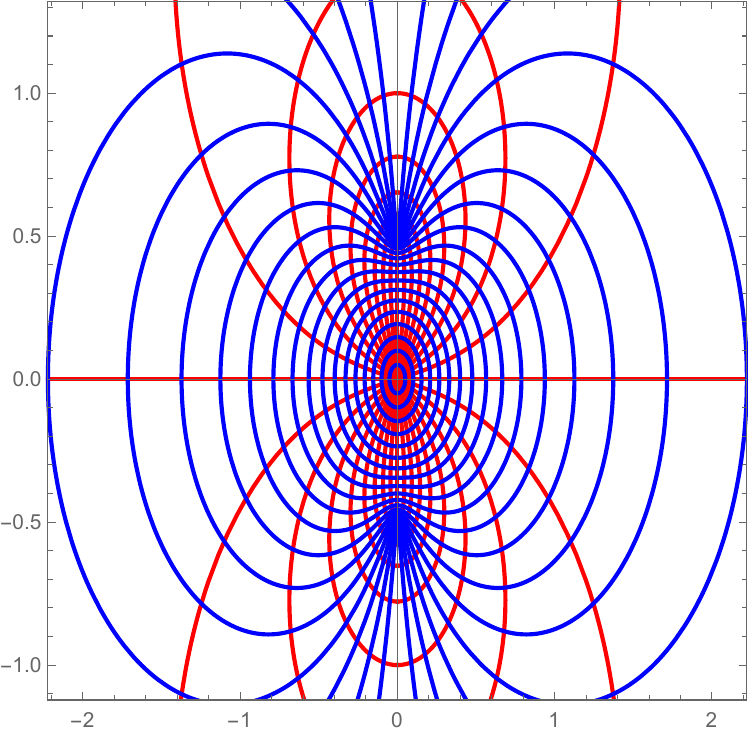}
   	 	\caption{{  The graph exhibits the image domain of $f_1(\mathbb{D})$.	}}
   	 	
   	 \end{figure}
   	
   \end{proof}
   Next, we obtain the upper bound for
   $|\Gamma_2| - |\Gamma_1|$ when $f$ belongs to the class $\mathcal{K}_{S}$.
   \begin{thm}\label{Th-1.2}
   	If $f \in \mathcal{K}_{S}$ be given by \eqref{Eq-1.1},
   	then the following sharp inequality holds:
   	\begin{align}
   	|\,\Gamma_{2}| - |\,\Gamma_{1}| \le \frac{1}{6}.
   	\end{align}
    The inequality is sharp.
   \end{thm}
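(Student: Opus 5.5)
The plan is to follow the proof of Theorem~\ref{Th-1.1}: write $f\in\mathcal{K}_S$ as a subordination to a Carath\'eodory function, express $a_2,a_3$ through $c_1,c_2$, rewrite $|\Gamma_2|-|\Gamma_1|$ as a multiple of $\Psi_+(c_1,c_2)$, and apply Lemma~\ref{Lem-1.1}.

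The first step is the normalization. The condition as written, ${\rm Re}\,\bigl((zf'(z))'/(f(z)-f(-z))\bigr)>0$, compares a quantity equal to $1$ at $z=0$ with one of order $z$, so it is not the right normalization. I will use Sakaguchi's standard form
\begin{align*}
\frac{2(zf'(z))'}{f'(z)+f'(-z)}=p(z)=1+\sum_{n=1}^{\infty}c_nz^n\in\mathcal{P}.
\end{align*}
We have $(zf')'=1+4a_2z+9a_3z^2+\cdots$ and $f'(z)+f'(-z)=2(1+3a_3z^2+\cdots)$. Comparing coefficients gives $4a_2=c_1$ and $9a_3-3a_3=c_2$, so
\begin{align*}
a_2=\tfrac14 c_1,\qquad a_3=\tfrac16 c_2 .
\end{align*}

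The second step substitutes these into \eqref{Eq-1.3}. This gives $\Gamma_1=-c_1/8$ and $\Gamma_2=-c_2/12+3c_1^2/64$, hence
\begin{align*}
|\Gamma_2|-|\Gamma_1|=\frac{1}{192}\left(|9c_1^2-16c_2|-|24c_1|\right)=\frac{1}{192}\Psi_+(c_1,c_2),
\end{align*}
with $B_1=24$, $B_2=9$ and $B_3=-16$. Here $|2B_2+B_3|=2<|B_3|+B_1=40$, so we are in the ``otherwise'' case of \eqref{Eq-2.2}. That case gives $\Psi_+\le 2|B_3|=32$, and therefore $|\Gamma_2|-|\Gamma_1|\le 32/192=1/6$.

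For sharpness, the lemma's bound in this case is attained at $c_1=0$, $|c_2|=2$. I therefore take $p(z)=(1+z^2)/(1-z^2)\in\mathcal{P}$ and look for an odd $f$. For odd $f$ we have $f'(z)+f'(-z)=2f'(z)$, so the condition reduces to $1+zf''(z)/f'(z)=(1+z^2)/(1-z^2)$. This is solved by
\begin{align*}
f_2(z)=\tfrac12\log\frac{1+z}{1-z}=z+\tfrac13z^3+\cdots .
\end{align*}
It gives $a_2=0$ and $a_3=1/3$, so $\Gamma_1=0$ and $|\Gamma_2|=1/6$.

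The main obstacle is the first step: the coefficient relation $a_3=c_2/6$ depends on the correct normalizing denominator. With the literal denominator $f(z)-f(-z)$ the computation changes and would not yield the stated constant $1/6$. Once the normalization is fixed, the rest is a direct application of Lemma~\ref{Lem-1.1}.
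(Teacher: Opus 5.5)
Your proof is correct and is essentially the paper's argument. The paper likewise uses the denominator $(f(z)-f(-z))'=f'(z)+f'(-z)$, obtains $a_2=c_1/4$ and $a_3=c_2/6$ with $B_1=24$, $B_2=9$, $B_3=-16$, applies the ``otherwise'' case of Lemma~\ref{Lem-1.1}, and takes the same extremal function $\frac12\log\frac{1+z}{1-z}$. Your prefactor $\frac{1}{192}$ in front of $\Psi_+$ is the correct one; the paper's $\frac{1}{16}$ at that point is a slip, and its final bound $32/192=1/6$ agrees with yours.
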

   \begin{proof}[\bf Proof of Theorem \ref{Th-1.2}]
   	Let $f\in \mathcal{K}_{S}$ be given by \eqref{Eq-1.1}, then by the principle of subordination, there exists a Schwarz function
   	$p(z)=\sum_{n=1}^{\infty}c_n z^n$ such that
   	\begin{align*}
   		\frac{2\bigl(z f'(z)\bigr)'}{(f(z)-f(-z))^{\prime}}=p(z). 
   	\end{align*}
   	By comparing coefficients of powers of $z$ in both sides, we obtain
   	\begin{align}\label{Eq-1.9}
   		a_{2}=\frac{1}{4}c_{1}, \qquad 
   		a_{3}=\frac{1}{6}c_2.
   	\end{align}
   	Now using \eqref{Eq-1.3} together with \eqref{Eq-1.6}, we have
   	\begin{align}\label{Eq-2.7}
   		|\,\Gamma_{2}| - |\,\Gamma_{1}| 
   		= \frac{1}{192}\!\left(|B_{3}c_{2}+B_{2}c_{1}^{2}|-|B_{1}c_{1}|\right)=\frac{1}{16}\,\Psi_{+}(c_1,c_2)
   	\end{align}
   	where $B_{1}=24$, $B_{2}=9$, and $B_{3}=-16$.\vspace{2mm}
   	
   	For the upper bound, we observe that the condition $	|2B_2 + B_3| = 2 < |B_3| + B_1 = 40$ is satisfied. Therefore, by Lemma \eqref{Lem-1.1}, we have $\Psi_{+}(c_1,c_2)\leq 2|B_3|$. Then from \eqref{Eq-2.7} we have
   	\begin{align*}
   		|\Gamma_2| - |\Gamma_1| \le \frac{2|-16|}{192}=\frac{1}{6}.
   	\end{align*}
   	To show the equality, we consider the function
   	\begin{align*}
   		f_{2}(z)=\frac{1}{2}\log\!\left(\frac{1+z}{1-z}\right)z+\frac{z^{3}}{3}+\frac{z^{5}}{5}+\cdots,
   		\;\; z\in\mathbb{D}.
   	\end{align*}
   	Moreover, a simple computation yields
   	\begin{align*}
   		\frac{2\bigl(z f_{2}'(z)\bigr)'}{(f_{2}(z)-f_{2}(-z))'}
   		=
   		\frac{1+z^{2}}{1-z^{2}}
   		=:p_{2}(z).
   	\end{align*}
   	
   	The function
   	\begin{align*}
   		p_{2}(z)
   		=
   		\frac{1+z^{2}}{1-z^{2}}
   		=
   		1+2z^{2}+2z^{4}+\cdots
   	\end{align*}
   	belongs to the Carath\'eodory class \(\mathcal{P}\).
   	Hence, \(f_{2}\in\mathcal{K}_{S}\).

   	A simple computation shows that
   	\begin{align*}
   		|\Gamma_{2}|-|\Gamma_{1}|
   		=
   		\left|-\frac{1}{6}\right|-0
   		=
   		\frac{1}{6}.
   	\end{align*}
   	
   	This shows that the  inequality in Theorem~\ref{Th-1.2} is sharp.

   	\begin{figure}[htbp]
   		\centering
   		\includegraphics[width=0.35\textwidth]{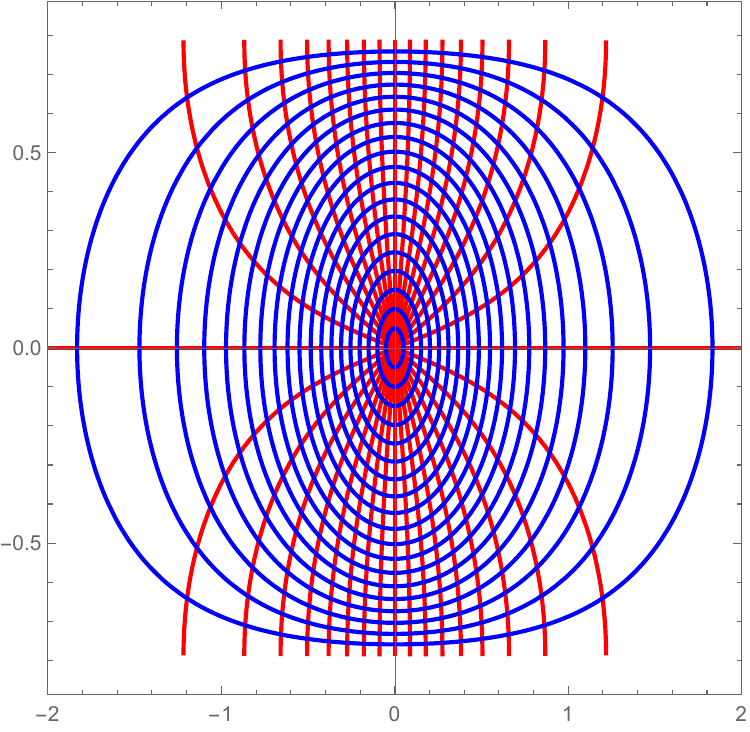}\;\
   		\caption{{  The graph exhibits the image domain of $f_2(\mathbb{D})$.}}
   		
   	\end{figure}
    \vspace{1.2mm}
   
   	 This completes the proof.
   \end{proof}
   We now establish the sharp lower and upper bounds of $ |\Gamma_2| - |\Gamma_1|$
   for the class $\mathcal{S}^{*}_{\leftmoon}$.
   
	 \begin{thm}\label{Th-1.3}
		If $f \in \mathcal{S}^{*}_{\leftmoon}$ be given by \eqref{Eq-1.1},
		then the following sharp inequality holds:
		\begin{align}
			-\frac{1}{\sqrt{10}}\leq|\,\Gamma_{2}| - |\,\Gamma_{1}| \le \frac{1}{4}.
		\end{align}
	 The inequalities are sharp.
	\end{thm}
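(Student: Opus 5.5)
The plan is to reduce the statement to Lemma~\ref{Lem-1.1} in the same way as in the proofs of Theorems~\ref{Th-1.1} and \ref{Th-1.2}, but now with the subordinating function $q(z)=z+\sqrt{1+z^2}$.

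\textbf{Coefficients of $f$ in terms of $c_1,c_2$.} Since $f\in\mathcal{S}^*_{\leftmoon}$, we have $zf'(z)/f(z)=q(\omega(z))$ for a Schwarz function $\omega$. I write $\omega=(p-1)/(p+1)$ with $p(z)=1+c_1z+c_2z^2+\cdots\in\mathcal{P}$, so that
\[
\omega(z)=\tfrac{c_1}{2}z+\Bigl(\tfrac{c_2}{2}-\tfrac{c_1^2}{4}\Bigr)z^2+\cdots .
\]
Using $q(w)=1+w+\tfrac12w^2+\cdots$, this gives
\[
\frac{zf'(z)}{f(z)}=1+\tfrac{c_1}{2}z+\Bigl(\tfrac{c_2}{2}-\tfrac{c_1^2}{8}\Bigr)z^2+\cdots .
\]
Comparing with $zf'/f=1+a_2z+(2a_3-a_2^2)z^2+\cdots$ yields
\[
a_2=\tfrac{c_1}{2},\qquad a_3=\tfrac{c_2}{4}+\tfrac{c_1^2}{16}.
\]

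\textbf{Reduction to Lemma~\ref{Lem-1.1}.} Substituting into \eqref{Eq-1.3} gives
\[
\Gamma_1=-\tfrac{c_1}{4},\qquad \Gamma_2=-\tfrac{c_2}{8}+\tfrac{5c_1^2}{32}.
\]
Hence
\[
|\Gamma_2|-|\Gamma_1|=\tfrac{1}{32}\Bigl(|5c_1^2-4c_2|-|8c_1|\Bigr)=\tfrac1{32}\Psi_+(c_1,c_2),
\qquad B_1=8,\ B_2=5,\ B_3=-4.
\]

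\textbf{Upper bound.} Here $|2B_2+B_3|=6<|B_3|+B_1=12$. So the second case of \eqref{Eq-2.2} applies, giving $\Psi_+\le 2|B_3|=8$ and hence $|\Gamma_2|-|\Gamma_1|\le 1/4$.

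\textbf{Lower bound.} We have $B_4=|4B_2+2B_3|=12$. The first case of \eqref{Eq-2.3} fails, since $8<12+8$. The second case holds, since $B_1^2=64\le 2\cdot4\cdot(12+8)=160$. Therefore
\[
\Psi_-\le 2B_1\sqrt{\tfrac{2|B_3|}{B_4+2|B_3|}}=16\sqrt{2/5},
\]
which gives $|\Gamma_2|-|\Gamma_1|\ge -\tfrac{16}{32}\sqrt{2/5}=-1/\sqrt{10}$.

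\textbf{Sharpness.} This is the step needing the most care, because the lemma does not name the extremal function.
\begin{itemize}
\item For the upper bound I take $p(z)=(1+z^2)/(1-z^2)$, i.e.\ $\omega(z)=z^2$, $c_1=0$, $c_2=2$. The corresponding $f$ is defined by $zf'/f=q(z^2)$ and gives $\Gamma_1=0$, $|\Gamma_2|=1/4$.
\item For the lower bound, I parametrize $c_1=x\in[0,2]$ and $c_2=\tfrac{x^2}{2}+\bigl(2-\tfrac{x^2}{2}\bigr)e^{i\theta}$, which is admissible for $p\in\mathcal{P}$. Choosing $\theta=0$ gives $c_2=2$ and $|5x^2-8|-8x=-8x-5x^2+8$ for $x^2\le 8/5$.
\item The maximum of $-\Psi_+$ is then attained at $x=2\sqrt{2/5}$, where $\Gamma_2=0$ and $|\Gamma_1|=\tfrac{1}{2}\sqrt{2/5}=1/\sqrt{10}$.
\item This pair $(c_1,c_2)=(2\sqrt{2/5},\,2)$ is realized by
\[
p(z)=\lambda\frac{1+z}{1-z}+(1-\lambda)\frac{1-z}{1+z},\qquad 2\lambda-1=\sqrt{2/5},
\]
which lies in $\mathcal{P}$.
\item The extremal function is $f$ with $zf'/f=q\bigl((p-1)/(p+1)\bigr)$.
\end{itemize}
The remaining check is that these $f$ lie in $\mathcal{S}$. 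This holds because $q$ is univalent with $\operatorname{Re}q>0$ only partially; more precisely, $\operatorname{Re}q\ge 1-\sqrt2$ is not positive everywhere. So I would cite the standard fact from Raina--Sokol that every $f$ with $zf'/f\prec q$ belongs to the class.
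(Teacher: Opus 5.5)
Your proposal is correct and follows the paper's proof step for step. It uses the same coefficients $a_2=c_1/2$, $a_3=c_2/4+c_1^2/16$, the same constants $(B_1,B_2,B_3)=(8,5,-4)$ and $B_4=12$, and the same cases of Lemma~\ref{Lem-1.1}. The extremal data also coincide: your convex combination is exactly the paper's $p(z)=(1+2Az+z^2)/(1-z^2)$ with $A=\sqrt{2/5}=2/\sqrt{10}$, and $\omega(z)=z^2$ gives the upper bound. Your check that $\Gamma_2=0$ and $|\Gamma_1|=1/\sqrt{10}$ at $(c_1,c_2)=(2\sqrt{2/5},2)$ is more explicit than the paper's.

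One correction to your closing remark: it is false that $\operatorname{Re} q$ fails to be positive on $\mathbb{D}$. The image $q(\mathbb{D})$ is the lune $\{|w-1|<\sqrt2\}\cap\{|w+1|>\sqrt2\}$, whose real-axis trace is $(\sqrt2-1,\sqrt2+1)$ and which lies in $\operatorname{Re} w>0$. Hence any $f$ with $zf'/f\prec q$ is starlike, and so univalent. Membership of your extremal functions in the class is therefore immediate, with no need to cite Raina--Sokol.
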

	\begin{proof}[\bf Proof of Theorem \ref{Th-1.3}]
	    Let $f \in \mathcal{S}^{*}_{\leftmoon}$. Then, in view of Definition \ref{Def-1.1}, it follows that
	    \begin{align}\label{Eq-2.9}
	    	\frac{z f'(z)}{f(z)} = w(z) + \sqrt{1 + w^{2}(z)},
	    \end{align}
	    where $w$ is a Schwarz function with $w(0)=0$ and $|w(z)| \le 1$ in $\mathbb{D}$. 
	    Let $p \in \mathcal{P}$. Then we can write
	    \begin{align}\label{Eq-2.10}
	    	w(z) = \frac{p(z)-1}{p(z)+1}.
	    \end{align}
	    From \eqref{Eq-2.9} and \eqref{Eq-2.10}, a simple computation shows that
	    \begin{align}\label{Eq-2.11}
	    	a_2 = \dfrac{1}{2}\,c_1\; \mbox{and}\;
	    	a_3 = \dfrac{1}{16}\,c_1^{2} + \dfrac{1}{4}\,c_2.
	    \end{align}
		Using \eqref{Eq-1.3} together with \eqref{Eq-2.11}, we see that
		\begin{align}\label{Eq-2.12}
			|\,\Gamma_{2}| - |\,\Gamma_{1}| 
			=\frac{1}{32} \left(|B_{3}c_{2}+B_{2}c_{1}^{2}|-|B_{1}c_{1}|\right):=\frac{1}{32}\Psi_{+}(c_1,c_2)
		\end{align}
		where $B_{1}=8$, $B_{2}=5$, and $B_{3}=-4$.\vspace{2mm}
		
		For the upper bound, we see that the condition $|2B_2 + B_3|=6<|B_3| + B_1=12.$
	 Therefore, by Lemma \ref{Lem-1.1}, we have $\Psi_{+}(c_1,c_2)\leq 2|B_3|$. Then from \eqref{Eq-2.12} we have
		\begin{align*}
			|\Gamma_2| - |\Gamma_1| \le \frac{2|B_3|}{32}=\frac{1}{4}.
		\end{align*} 
	To show the equality, we consider the function
	\begin{align*}
		f_{3}(z)
		&=
		z\exp\!\left(
		\int_{0}^{z}\frac{t^{2}+\sqrt{1+t^{4}}-1}{t}\,dt
		\right)\\&=\dfrac{\sqrt{2}z\exp\left(\frac{z^2-1+\sqrt{1+z^4}}{2}\right)}{\left(\sqrt{1+z^4}+1\right)^{\frac{1}{2}}}\\&=z+\frac{1}{2}z^3+\cdots,
		\quad z\in\mathbb{D}.
	\end{align*}
	
	Moreover, a simple computation yields
	\begin{align*}
		\frac{z f_{3}'(z)}{f_{3}(z)}
		=
		z^{2}+\sqrt{1+z^{4}}
		=:p_{3}(z).
	\end{align*}
	
	The function $	p_{3}(z)
	=
	z^{2}+\sqrt{1+z^{4}}$
	is subordinate to a Carath\'eodory function. Indeed, letting
	\begin{align*}
		p(z)=\frac{1+z^{2}}{1-z^{2}}
		=
		1+2z^{2}+2z^{4}+\cdots \in \mathcal{P},
	\end{align*}
	and using the relation
	$	w(z)=({p(z)-1})/({p(z)+1}),$
	we obtain \(w(z)=z^{2}\). Hence, \(f_{3}\in \mathcal{S}^{*}_{\leftmoon}\).

	Then a simple computation shows that
	\begin{align*}
		|\Gamma_{2}|-|\Gamma_{1}|
		=
		\frac{1}{4}.
	\end{align*}
	
	This shows that the right--hand side inequality in
	Theorem~\ref{Th-1.3} is sharp.
	\begin{figure}[htbp]
		\centering
		\includegraphics[width=0.35\textwidth]{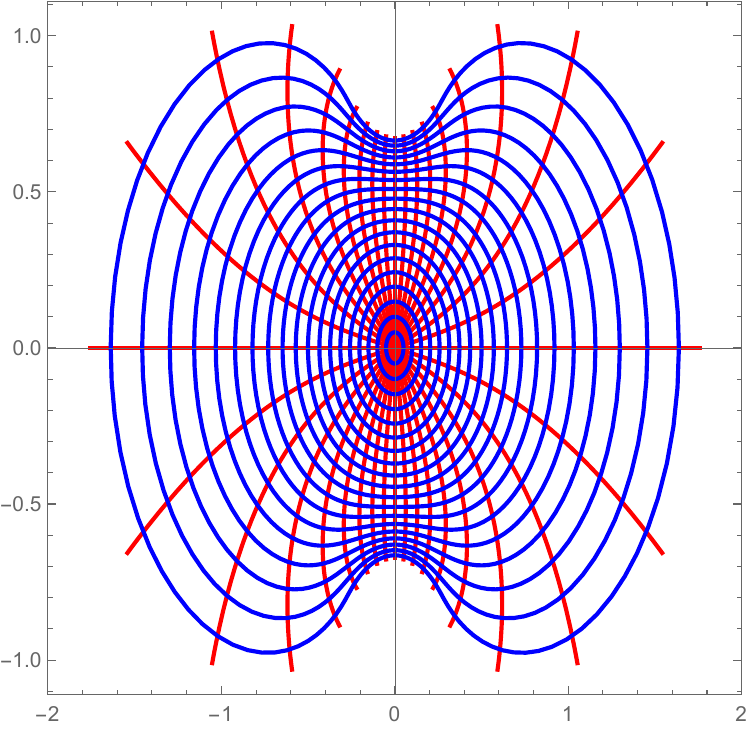}\;\;\;\;\includegraphics[width=0.35\textwidth]{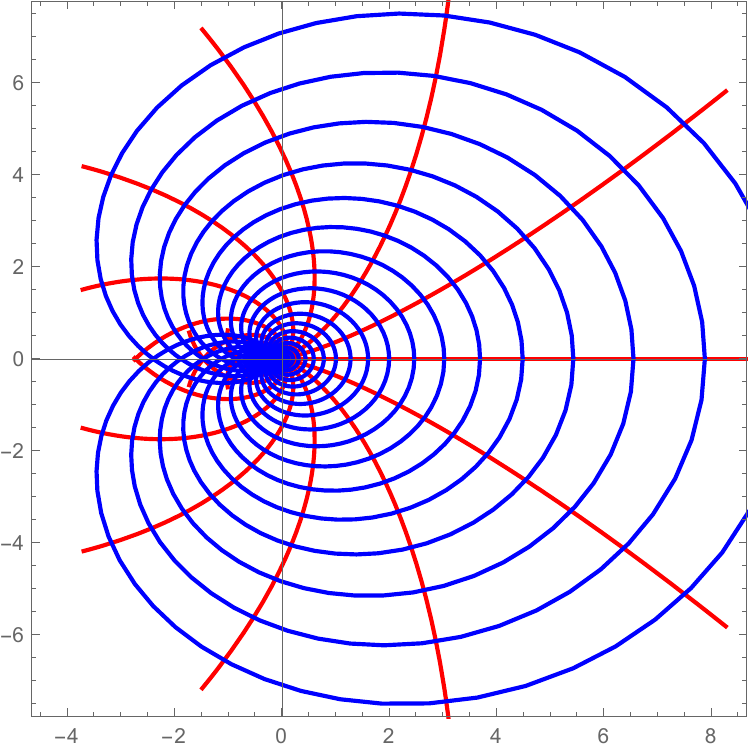}
		\caption{{  The graphs exhibit the image domains of $f_3(\mathbb{D})$ and  $f_4(\mathbb{D})$.}}
		
	\end{figure}
	
		 We now consider the  lower bound. Then
		\begin{align}\label{Eq-2.15}
			|\Gamma_2| - |\Gamma_1|=\frac{1}{32}\Psi_{-}(c_1,c_2),
		\end{align} 
		where $\Psi_{+}(c_1,c_2)=-\Psi_{-}(c_1,c_2)$. Since $B_4=|4B_2+2B_3|=12$, it is easy to see that the inequality $ B_{1} \ge B_{4} + 2|B_{3}|$ does not hold and the inequality
		\begin{align*}
			B_1^2=64< 2|B_3|(B_4+2|B_3|)=160
		\end{align*}
		  holds.
		   Hence, by Lemma \ref{Lem-1.1}, we obtain
		  \begin{align}\label{Eqn-2.16}
		  	\Psi_{-}(c_1,c_2)\leq 2B_{1}\sqrt{\dfrac{2|B_{3}|}{B_{4} + 2|B_{3}|}}=16\sqrt{\frac{2}{5}}.
		  \end{align}
		  Therefore from \eqref{Eq-2.15} and \eqref{Eqn-2.16} we obtain the required inquality 
		  \begin{align*}
		  	|\Gamma_2|-|\Gamma_1|\ge -\frac{1}{\sqrt{10}}.
		  \end{align*}
		 Equality in the left-hand side of Theorem~\ref{Th-1.3} is attained for the function
		 \begin{align*}
		 	p(z)=\frac{1+2Az+z^{2}}{1-z^{2}}, 
		 	\qquad 
		 	A=\frac{2}{\sqrt{10}}.
		 \end{align*}
		 The corresponding extremal function is given by
		 \begin{align*}
		 	f_4(z)
		 	=
		 	z\exp\!\left(
		 	\int_{0}^{z}
		 	\frac{w(t)+\sqrt{1+w^{2}(t)}-1}{t}\,dt
		 	\right),
		 \end{align*}
		 where $	w(z)={Az+z^{2}}/{1-z^{2}}.$
		 This function belongs to the class \(\mathcal{S}^{*}_{\leftmoon}\) and satisfies $|\Gamma_2|-|\Gamma_1|=-{1}/{\sqrt{10}}.$
		 Therefore, the lower bound is sharp.

	\end{proof}
	We now establish the sharp lower and upper estimates of $|\Gamma_{2}-\Gamma_{1}|$
	for the class $\mathcal{C}_{\leftmoon}$.
	
	 \begin{thm}\label{Th-1.4}
		If $f \in \mathcal{C}_{\leftmoon}$ be given by \eqref{Eq-1.1},
		then the following sharp inequality holds:
		\begin{align}
			-\frac{4}{21}\leq|\,\Gamma_{2}| - |\,\Gamma_{1}| \le \frac{1}{12}.
		\end{align}
		 The inequalities are sharp.
	\end{thm}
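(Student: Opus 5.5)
Our plan is to follow the scheme of the proof of Theorem~\ref{Th-1.3}, but to exploit the Alexander-type relation between $\mathcal{C}_{\leftmoon}$ and $\mathcal{S}^{*}_{\leftmoon}$. Since $1+zf''(z)/f'(z)=z g'(z)/g(z)$ with $g=zf'$, we have $f\in\mathcal{C}_{\leftmoon}$ if and only if $zf'\in\mathcal{S}^{*}_{\leftmoon}$. Consequently the $n$-th coefficient of $f$ is $1/n$ times that of $zf'$. Writing $w=(p-1)/(p+1)$ with $p(z)=1+c_1z+c_2z^2+\cdots\in\mathcal{P}$ as in \eqref{Eq-2.10}, the formulas \eqref{Eq-2.11} then give
\begin{align*}
a_2=\frac{1}{4}c_1,\qquad a_3=\frac{1}{48}c_1^2+\frac{1}{12}c_2 .
\end{align*}
Substituting into \eqref{Eq-1.3} yields $\Gamma_1=-c_1/8=-24c_1/192$ and $\Gamma_2=(7c_1^2-8c_2)/192$. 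Hence
\begin{align*}
|\Gamma_2|-|\Gamma_1|=\frac{1}{192}\Bigl(|B_3c_2+B_2c_1^2|-|B_1c_1|\Bigr)=\frac{1}{192}\Psi_+(c_1,c_2),
\end{align*}
with $B_1=24$, $B_2=7$, $B_3=-8$. I would double-check this coefficient computation first, since every later constant depends on it.

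For the upper bound we apply \eqref{Eq-2.2}. Here $|2B_2+B_3|=6<|B_3|+B_1=32$, so $\Psi_+\le 2|B_3|=16$, and therefore $|\Gamma_2|-|\Gamma_1|\le 16/192=1/12$. For sharpness we take $w(z)=z^2$, that is $p(z)=(1+z^2)/(1-z^2)$ with $c_1=0$ and $c_2=2$. The extremal function $f$ is determined by $1+zf''/f'=z^2+\sqrt{1+z^4}$, so $zf'=f_3$. Then $\Gamma_1=0$ and $\Gamma_2=-16/192=-1/12$.

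For the lower bound we use $|\Gamma_2|-|\Gamma_1|=-\Psi_-(c_1,c_2)/192$ together with \eqref{Eq-2.3}. We have $B_4=|4B_2+2B_3|=12$. The first case fails because $B_1=24<B_4+2|B_3|=28$. The second case fails because $B_1^2=576>2|B_3|(B_4+2|B_3|)=448$. We are therefore in the third case, which gives
\begin{align*}
\Psi_-\le 2|B_3|+\frac{B_1^2}{B_4+2|B_3|}=16+\frac{576}{28}=\frac{256}{7},
\end{align*}
and hence $|\Gamma_2|-|\Gamma_1|\ge -\frac{256}{7\cdot 192}=-\frac{4}{21}$.

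The main obstacle is exhibiting the extremal function for the lower bound, since Lemma~\ref{Lem-1.1} does not provide it explicitly. Mimicking $f_4$, I would first try $p(z)=(1+2Az+z^2)/(1-z^2)$ with $A\in[0,1]$. This $p$ lies in $\mathcal{P}$, being a convex combination of $(1+z)/(1-z)$ and $(1-z)/(1+z)$, and has $c_1=2A$ and $c_2=2$. Then $\Psi_-=48A-|28A^2-16|$. On $A\ge\sqrt{4/7}$ this equals $48A-28A^2+16$, which is maximized at $A=6/7>\sqrt{4/7}$ with value $256/7$, matching the bound. With $w=(p-1)/(p+1)=(Az+z^2)/(1+Az)$, the extremal function is
\begin{align*}
f(z)=\int_0^z \exp\!\left(\int_0^{\zeta}\frac{w(t)+\sqrt{1+w^2(t)}-1}{t}\,dt\right)d\zeta .
\end{align*}
For this $f$ we have $7c_1^2-8c_2=(7\cdot 144-16\cdot 49)/49=224/49=32/7>0$, which confirms that the moduli behave as assumed and that equality $-4/21$ is attained.
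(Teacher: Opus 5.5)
Your proposal is correct and follows the paper's route. You use the same coefficient formulas $a_2=c_1/4$ and $a_3=c_1^2/48+c_2/12$, which you derive neatly from $zf'\in\mathcal{S}^*_{\leftmoon}$. You use the same constants $B_1=24$, $B_2=7$, $B_3=-8$, and the same cases of Lemma~\ref{Lem-1.1}, which give $1/12$ and $-4/21$.

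For the lower-bound extremal, your $A=6/7$ (so $c_1=12/7$, $c_2=2$) is the right choice, and your explicit optimization is what makes the sharpness claim hold. The paper's stated $A=4/7$ gives only $|\Gamma_2|-|\Gamma_1|=1/28-1/7=-3/28$, not $-4/21$.
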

	\begin{proof}[\bf Proof of Theorem \ref{Th-1.4}]
		Let $f \in \mathcal{C}_{\leftmoon}$. Then, in view of Definition \ref{Def-1.1}, it follows that
		\begin{align}\label{Eq-2.16}
		1 + \frac{z f''(z)}{f'(z)} = w(z) + \sqrt{1 + w^2(z)}.
		\end{align}
		where $w$ is a Schwarz function with $w(0)=0$ and $|w(z)| \le 1$ in $\mathbb{D}$. 
		Let $p \in \mathcal{P}$. Then we can write
		\begin{align}\label{Eq-2.17}
			w(z) = \frac{p(z)-1}{p(z)+1}.
		\end{align}
		From  \eqref{Eq-2.16} and \eqref{Eq-2.17}, a simple computation shows that
		\begin{align}\label{Eq-2.18}
			a_2 = \dfrac{1}{4}\,c_1\; \mbox{and}\;a_3 = \dfrac{1}{48}\,c_1^{2} + \dfrac{1}{12}\,c_2.
		\end{align}
		Using \eqref{Eq-1.3} together with \eqref{Eq-2.18}, we have
		\begin{align}
			|\,\Gamma_{2}| - |\,\Gamma_{1}| 
			= \frac{1}{192}\!\left(|B_{3}c_{2}+B_{2}c_{1}^{2}|-|B_{1}c_{1}|\right)=\frac{1}{16}\,\Psi_{+}(c_1,c_2)
		\end{align}
		where $B_{1}=24$, $B_{2}=7$, and $B_{3}=-8$.\vspace{2mm}
		
		For the upper bound, the condition $	|2B_2 + B_3| =6< |B_3| + B_1 = 32$
		 holds; thus, by Lemma \ref{Lem-1.1}, we obtain
		\begin{align*}
			|\Gamma_2| - |\Gamma_1| \le \frac{2|-8|}{192}=\frac{1}{12}.
		\end{align*} 
	To show the equality, we consider the function
	\begin{align*}
		f_{5}(z)
		&=
		\int_{0}^{z}
		\exp\!\left(
		\int_{0}^{\zeta}
		\frac{t^{2}+\sqrt{1+t^{4}}-1}{t}\,dt
		\right)
		d\zeta\\&=\int_{0}^{z}\dfrac{\sqrt{z}\exp\left(\frac{\zeta^2-1+\sqrt{1+\zeta^4}}{2}\right)}{\left(\sqrt{1+\zeta^4}+1\right)^{\frac{1}{2}}}\\&=z+\frac{1}{6}z^3+\frac{1}{20}z^5+\cdots,\;\; z\in\mathbb{D}.
	\end{align*}
	
	Moreover, a simple computation yields
	\begin{align*}
		1+\frac{z f_{5}''(z)}{f_{5}'(z)}
		=
		z^{2}+\sqrt{1+z^{4}}
		=:p_{4}(z).
	\end{align*}
	
	The function $	p_{4}(z)
	=
	z^{2}+\sqrt{1+z^{4}}$
	is subordinate to a Carath\'eodory function. Indeed, letting
	\[
	p(z)=\frac{1+z^{2}}{1-z^{2}}
	=
	1+2z^{2}+2z^{4}+\cdots \in \mathcal{P},
	\]
	and using the relation
	\[
	w(z)=\frac{p(z)-1}{p(z)+1},
	\]
	we obtain \( w(z)=z^{2} \). Hence,
	\[
	1+\frac{z f_{5}''(z)}{f_{5}'(z)}
	=
	w(z)+\sqrt{1+w^{2}(z)},
	\]
	and therefore \( f_{5}\in \mathcal{C}_{\leftmoon} \).
	
		\begin{figure}[htbp]
		\centering
		\includegraphics[width=0.4\textwidth]{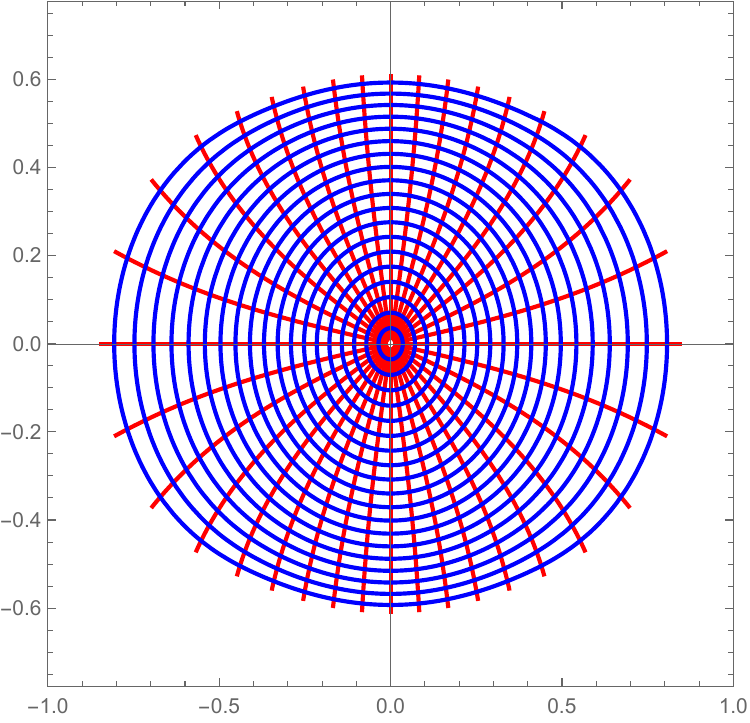}\;\;\;\;\includegraphics[width=0.4\textwidth]{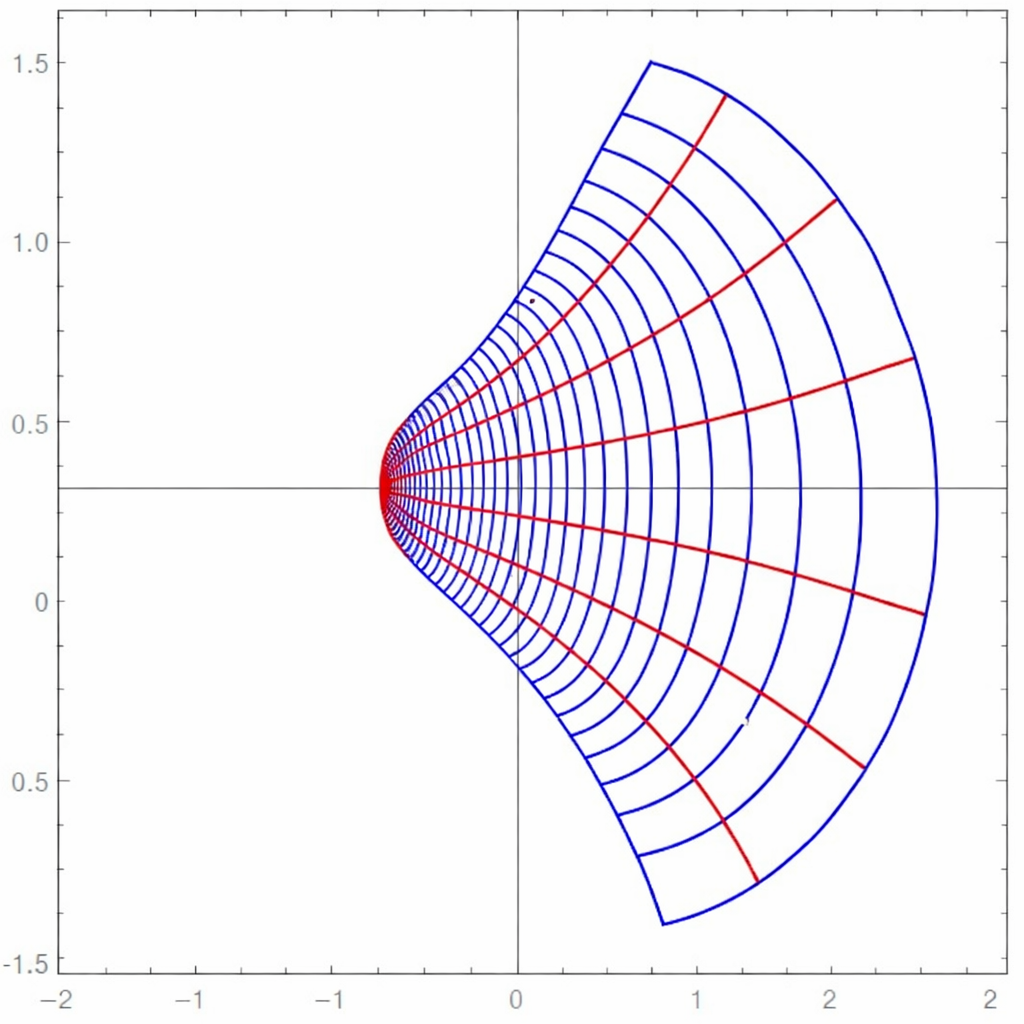}
		\caption{{ The graphs exhibit the image domains of $f_5(\mathbb{D})$ and  $f_6(\mathbb{D})$.}}
	\end{figure}
A simple computation shows that
	\begin{align*}
		|\Gamma_{2}|-|\Gamma_{1}|
		=
		\frac{1}{12}.
	\end{align*}
		 We now consider the  lower bound. Then
	\begin{align}\label{Eq-2.5}
		|\Gamma_2| - |\Gamma_1|=\frac{1}{192}\Psi_{-}(c_1,c_2),
	\end{align} 
	where $\Psi_{+}(c_1,c_2)=-\Psi_{-}(c_1,c_2)$.  Since $B_4 = |4B_2 + 2B_3| = 12,$
	the inequality $B_1 \geq B_4 + 2|B_3|$ is not satisfied. Moreover, $	2|B_3|(B_4 + 2|B_3|) = 448,$
	and hence the inequality $	B_1^{2} \leq 2|B_3|(B_4 + 2|B_3|)$
	is also not satisfied. Hence, by Lemma \ref{Lem-1.1}, we obtain
	\begin{align}\label{Eq-2.6}
		\Psi_{-}(c_1,c_2)\leq \left(2|B_3|+\frac{B_1^{2}}{B_4+2|B_3|}\right)
		= \frac{256}{7}.
	\end{align}
	Therefore from \eqref{Eq-2.5} and \eqref{Eq-2.6} we obtain the required inquality 
	\begin{align*}
		|\Gamma_2|-|\Gamma_1|\ge -\frac{4}{{21}}.
	\end{align*}

		\vspace{2mm}
	The equality in the left--hand side of Theorem~\ref{Th-1.3} for the class
	$\mathcal{C}_{\leftmoon}$ is attained for
	\begin{align*}
			p(z)=\frac{1+2Az+z^{2}}{1-z^{2}}, \qquad A=\frac{4}{7}.
	\end{align*}
	The corresponding extremal function
\begin{align*}
f_6(z)
	=
\int_{0}^{z}
\exp\!\left(
\int_{0}^{\zeta}
\frac{w(t)+\sqrt{1+w(t)^{2}}-1}{t}\,dt
\right)
d\zeta,
\end{align*}
	where
	\begin{align*}
			w(z)=\frac{Az+z^{2}}{1+Az},
	\end{align*}
	belongs to $\mathcal{C}_{\leftmoon}$ and satisfies $|\gamma_{2}|-|\gamma_{1}|=-{4}/{21}.$ Hence, the lower bound is sharp. This completes the proof.
	\end{proof}
		\noindent{\bf Acknowledgment:}  The authors would like thank the anonymous referee for his/her elaborate comments and valuable suggestions which improve significantly the presentation of the paper. The first author is supported by SERB File No.\ SUR/2022/002244, Govt.\ of India, The second author is supported by UGC-JRF (NTA Ref. No.: $ 201610135853 $), New Delhi, India. \vspace{2mm}

		\noindent\textbf{Compliance of Ethical Standards}\\
		
		\noindent\textbf{Conflict of interest} The authors declare that there is no conflict of interest regarding the publication of this paper.\vspace{1.2mm}
		
		\noindent\textbf{Data availability statement}  Data sharing not applicable to this article as no datasets were generated or analyzed during the current study.

\end{document}